\theoremstyle{plain}
\newtheorem{thm}{Theorem}
\newtheorem{lemma}{Lemma}
\newtheorem{definite}{Definition}
\newcommand{\ra}{\rightarrow}
\begin{document}
\title{On half Cauchy sequences}
\author[Frank J. Palladino]{Frank J. Palladino}
\date{January 24, 2011}

\begin{abstract}
\noindent 
In this note we introduce and define half Cauchy sequences. We prove that a sequence of real numbers is convergent if and only if it is bounded and half Cauchy. 
We also provide an example of how the concept may be used.

\end{abstract}
\maketitle

\section{Introduction}
Bounded monotone sequences of real numbers converge. This is an indispensable theorem in real analysis which is used to prove convergence in a wide variety of situations. Since this is such a useful idea, why not weaken the hypothesis of the theorem?
A sequence $\{a_n\}^{\infty}_{n=1}$ is monotone non-increasing if $a_{n+1}-a_{n}\leq 0$. However, for our purposes, $0$ is an unnecessarily strict reqirement on the right hand side of the inequality. We wish to find a property weaker than  monotonicity but strong enough to guarantee convergence under similar circumstances. 
This becomes a problem of restricting upward motion of our sequence $\{a_n\}^{\infty}_{n=1}$ by just enough to guarantee convergence.\newline
Since we would like sequences with our yet to be determined property to converge, such sequences must be Cauchy, even if it is not apparent from the outset. So upward motion is naturally restricted by the Cauchy condition. 
Therefore, our sequences must have the property that for every $\epsilon > 0$ there exists an $N\in\mathbb{N}$ so that for $n\geq m \geq N$, $a_{n}-a_{m}<\epsilon$. This reasoning provides the motivation for the forthcoming definition of a half Cauchy sequence.

\section{Basic Properties of half Cauchy sequences}
\begin{definite}
We say that a sequence $\{a_n\}^{\infty}_{n=1}$ in $\mathbb{R}$ is upward half Cauchy if for every $\epsilon > 0$ there exists an $N\in\mathbb{N}$ so that for $n\geq m \geq N$, $a_{m}-a_{n}<\epsilon$.\newline 
\end{definite}
\begin{definite}
We say that a sequence $\{a_n\}^{\infty}_{n=1}$ in $\mathbb{R}$ is downward half Cauchy if for every $\epsilon > 0$ there exists an $N\in\mathbb{N}$ so that for $n\geq m \geq N$, $a_{n}-a_{m}<\epsilon$.\newline
\end{definite}
\begin{definite}
We say that a sequence $\{a_n\}^{\infty}_{n=1}$ in $\mathbb{R}$ is half Cauchy if the sequence is either upward half Cauchy, or downward half Cauchy, or both. 
\end{definite}
\begin{thm}
A sequence $\{a_n\}^{\infty}_{n=1}$ in $\mathbb{R}$ is Cauchy if and only if it is both upward half Cauchy and downward half Cauchy.
\end{thm}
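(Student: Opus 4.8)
The plan is to reduce everything to the elementary fact that for a real number $x$ and $\epsilon > 0$, the inequality $|x| < \epsilon$ holds if and only if both $x < \epsilon$ and $-x < \epsilon$. Applying this with $x = a_n - a_m$ converts the Cauchy condition directly into the conjunction of the two half Cauchy conditions, with the indices already arranged so that $n \geq m \geq N$. So the proof is essentially a translation exercise between the defining inequalities, done in both directions.

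For the forward implication, suppose $\{a_n\}^{\infty}_{n=1}$ is Cauchy and fix $\epsilon > 0$. Choose $N$ so that $|a_n - a_m| < \epsilon$ whenever $n \geq m \geq N$. Then for such $n,m$ we have $a_m - a_n \leq |a_n - a_m| < \epsilon$, which is exactly the upward half Cauchy condition, and likewise $a_n - a_m \leq |a_n - a_m| < \epsilon$, which is the downward half Cauchy condition. The same $N$ serves for both, so the sequence is both upward and downward half Cauchy.

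For the reverse implication, suppose $\{a_n\}^{\infty}_{n=1}$ is both upward and downward half Cauchy and fix $\epsilon > 0$. Apply the upward condition to obtain $N_1$ with $a_m - a_n < \epsilon$ for all $n \geq m \geq N_1$, and the downward condition to obtain $N_2$ with $a_n - a_m < \epsilon$ for all $n \geq m \geq N_2$. Set $N = \max\{N_1, N_2\}$. For $n \geq m \geq N$ both inequalities hold simultaneously, and since $|a_n - a_m|$ is either $a_n - a_m$ or $a_m - a_n$, we conclude $|a_n - a_m| < \epsilon$. Hence $\{a_n\}^{\infty}_{n=1}$ is Cauchy.

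I do not anticipate a genuine obstacle: the argument is a direct unpacking of the definitions. The only point requiring a moment's care is that the half Cauchy definitions fix the order of the indices ($n \geq m$), so one must split the absolute value in the direction consistent with that ordering; once the roles of $m$ and $n$ are tracked correctly, both directions are immediate, and taking the maximum of the two thresholds handles the reverse direction cleanly.
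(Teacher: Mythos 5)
Your proposal is correct and follows essentially the same route as the paper: the same $N$ works for both halves in the forward direction, and taking $N=\max\{N_1,N_2\}$ and combining the two one-sided inequalities to bound $|a_n-a_m|$ is exactly the paper's argument for the converse.
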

\begin{proof}
Suppose that a sequence $\{a_n\}^{\infty}_{n=1}$ in $\mathbb{R}$ is Cauchy then choose an arbitrary $\epsilon > 0$. 
Since $\{a_n\}^{\infty}_{n=1}$ is Cauchy there exists an $N\in\mathbb{N}$ so that for $n\geq m \geq N$, $|a_{m}-a_{n}|<\epsilon$. 
Using this same $N\in\mathbb{N}$ we have that for $n\geq m \geq N$, $a_{m}-a_{n}\leq |a_{m}-a_{n}|<\epsilon$ and $a_{n}-a_{m}\leq |a_{m}-a_{n}|<\epsilon$. 
Thus $\{a_n\}^{\infty}_{n=1}$ is both upward half Cauchy and downward half Cauchy. Now suppose $\{a_n\}^{\infty}_{n=1}$ is both upward half Cauchy and downward half Cauchy then choose an arbitrary $\epsilon > 0$.
Since $\{a_n\}^{\infty}_{n=1}$ is upward half Cauchy, there exists an $N_{1}\in\mathbb{N}$ so that for $n\geq m \geq N_{1}$, $a_{m}-a_{n}<\epsilon$. 
Since $\{a_n\}^{\infty}_{n=1}$ is downward half Cauchy, there exists an $N_{2}\in\mathbb{N}$ so that for $n\geq m \geq N_{2}$, $a_{n}-a_{m}<\epsilon$. 
Using $N=\max(N_{1},N_{2})\in \mathbb{N}$ we get that for $n\geq m \geq N$, $a_{m}-a_{n}<\epsilon$ and $a_{n}-a_{m}<\epsilon$. Thus for $n\geq m \geq N$, $-\epsilon < a_{n}-a_{m}<\epsilon$, so $|a_{m}-a_{n}|<\epsilon$.
\end{proof}
\begin{thm}
If a sequence $\{a_n\}^{\infty}_{n=1}$ in $\mathbb{R}$ is increasing or non-decreasing, then it is upward half Cauchy.
\end{thm}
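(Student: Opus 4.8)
The plan is to prove this directly from the definition of upward half Cauchy, exploiting the fact that a non-decreasing sequence never moves downward, so the quantity $a_{m}-a_{n}$ that is controlled in the definition is automatically non-positive whenever $n\geq m$.

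First I would reduce to the non-decreasing case: a strictly increasing sequence is in particular non-decreasing, so it suffices to argue under the hypothesis $a_{k+1}-a_{k}\geq 0$ for every $k\in\mathbb{N}$. From this, a short induction on $n$ — or equivalently the telescoping identity
\[
a_{n}-a_{m}=\sum_{k=m}^{n-1}\bigl(a_{k+1}-a_{k}\bigr)\geq 0
\]
valid for $n\geq m$ — yields $a_{m}\leq a_{n}$ whenever $n\geq m$.

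Next, I would fix an arbitrary $\epsilon>0$ and simply take $N=1$. Then for all $n\geq m\geq N$ we have $a_{m}-a_{n}\leq 0<\epsilon$, which is exactly the upward half Cauchy condition. This completes the proof.

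There is essentially no obstacle here; the only point requiring a moment of care is the bookkeeping of which index is subtracted from which in the definition of upward half Cauchy. One must check that it is the backward difference $a_{m}-a_{n}$ (with $m\leq n$) that appears, which is precisely the difference a non-decreasing sequence renders non-positive. Note also that neither boundedness nor convergence is invoked, and the same choice $N=1$ works uniformly in $\epsilon$.
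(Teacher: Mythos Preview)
Your proof is correct and follows essentially the same approach as the paper: observe that non-decreasing implies $a_{m}-a_{n}\leq 0$ for $n\geq m$, then take $N=1$ for any $\epsilon>0$. The telescoping justification you include is a bit more explicit than the paper's version, but the argument is identical in substance.
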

\begin{proof}
Suppose a sequence $\{a_n\}^{\infty}_{n=1}$ in $\mathbb{R}$ is increasing or non-decreasing then for any $n\geq m$ with $n,m\in\mathbb{N}$, $a_{m}-a_{n}\leq 0$. Thus for every $\epsilon > 0$ take $N=1$, then for $n\geq m \geq N$, $a_{m}-a_{n}\leq 0 <\epsilon$. 
\end{proof}
\begin{thm}
If a sequence $\{a_n\}^{\infty}_{n=1}$ in $\mathbb{R}$ is decreasing or non-increasing, then it is downward half Cauchy.
\end{thm}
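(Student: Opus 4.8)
The plan is to mirror the argument just used for the previous theorem, exploiting the symmetry between the upward and downward half Cauchy conditions under reversal of an inequality. The key observation is that a non-increasing sequence never increases in value as the index grows, so the quantity $a_n - a_m$ that appears in the definition of \emph{downward} half Cauchy is automatically non-positive whenever $n \geq m$.

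First I would unpack the hypothesis: if $\{a_n\}^{\infty}_{n=1}$ is decreasing or non-increasing, then $a_{k+1} - a_k \leq 0$ for every $k \in \mathbb{N}$, and by summing such inequalities (equivalently, by a trivial induction on $n - m$) one obtains $a_n - a_m \leq 0$ for all $n \geq m$ with $n, m \in \mathbb{N}$. Then, given an arbitrary $\epsilon > 0$, I would simply take $N = 1$; for any $n \geq m \geq N$ we have $a_n - a_m \leq 0 < \epsilon$, which is precisely the defining inequality for downward half Cauchy.

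There is essentially no obstacle here: the statement is the exact dual of the preceding theorem, and the only point requiring care is keeping the roles of $a_n$ and $a_m$ straight, so that the difference shown to be non-positive is $a_n - a_m$ (as demanded by the definition of downward half Cauchy) rather than $a_m - a_n$. No boundedness, no $\epsilon$-dependent choice of $N$, and no appeal to completeness of $\mathbb{R}$ is needed.
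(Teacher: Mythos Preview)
Your proposal is correct and matches the paper's own proof essentially line for line: observe that $a_n - a_m \leq 0$ whenever $n \geq m$, then take $N = 1$ for any $\epsilon > 0$. The only difference is cosmetic---you spell out the step from $a_{k+1} - a_k \leq 0$ to $a_n - a_m \leq 0$ via summing/induction, whereas the paper states it directly.
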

\begin{proof}
Suppose a sequence $\{a_n\}^{\infty}_{n=1}$ in $\mathbb{R}$ is decreasing or non-increasing then for any $n\geq m$ with $n,m\in\mathbb{N}$, $a_{n}-a_{m}\leq 0$. Thus for every $\epsilon > 0$ take $N=1$, then for $n\geq m \geq N$, $a_{n}-a_{m}\leq 0 <\epsilon$. 
\end{proof}
We have already shown that both monotone sequences and Cauchy sequences are half Cauchy. Now we will show that bounded half Cauchy sequences of real numbers converge.
The following lemma will shorten the proof of the forthcoming Theorem 4. The proof of this lemma is left as an exercise for the reader.
\begin{lemma}
A sequence $\{a_n\}^{\infty}_{n=1}$ in $\mathbb{R}$ is upward half Cauchy if and only if the sequence $\{-a_n\}^{\infty}_{n=1}$ in $\mathbb{R}$ is downward half Cauchy.
\end{lemma}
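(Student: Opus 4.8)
The plan is to prove both implications directly from Definitions~1 and 2, exploiting the single algebraic identity $(-a_n)-(-a_m)=a_m-a_n$, which turns an ``upward'' difference for $\{a_n\}_{n=1}^{\infty}$ into a ``downward'' difference for $\{-a_n\}_{n=1}^{\infty}$ and conversely. No estimates beyond this rewriting are needed, and the witnessing index $N$ never has to be changed.

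First I would assume $\{a_n\}_{n=1}^{\infty}$ is upward half Cauchy and fix an arbitrary $\epsilon>0$. By Definition~1 there is an $N\in\mathbb{N}$ such that $a_m-a_n<\epsilon$ whenever $n\geq m\geq N$. Setting $b_n=-a_n$, for that same $N$ and the same range $n\geq m\geq N$ we compute $b_n-b_m=-a_n-(-a_m)=a_m-a_n<\epsilon$, which is exactly the condition in Definition~2 for $\{b_n\}_{n=1}^{\infty}=\{-a_n\}_{n=1}^{\infty}$ to be downward half Cauchy. Since $\epsilon>0$ was arbitrary, this direction is complete.

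For the converse I would run the symmetric computation: assume $\{-a_n\}_{n=1}^{\infty}$ is downward half Cauchy, fix $\epsilon>0$, extract $N\in\mathbb{N}$ with $(-a_n)-(-a_m)<\epsilon$ for all $n\geq m\geq N$, and observe that this inequality reads $a_m-a_n<\epsilon$, so $\{a_n\}_{n=1}^{\infty}$ is upward half Cauchy. Alternatively, one could apply the forward implication already established to the sequence $\{-a_n\}_{n=1}^{\infty}$ in place of $\{a_n\}_{n=1}^{\infty}$, together with $-(-a_n)=a_n$ and the evident dual of the forward statement; but the direct computation is shorter.

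I do not anticipate any genuine obstacle here, since the statement is a pure unwinding of the two definitions. The only point requiring a little care is keeping the roles of the indices $n$ and $m$ and the direction of the inequality aligned correctly when passing between the upward and downward conditions; this is precisely what the identity $(-a_n)-(-a_m)=a_m-a_n$ handles.
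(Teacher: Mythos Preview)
Your proof is correct. The paper actually leaves this lemma as an exercise for the reader, so there is no proof to compare against; your direct unwinding of Definitions~1 and~2 via the identity $(-a_n)-(-a_m)=a_m-a_n$ is exactly the intended argument.
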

\begin{thm}
 A sequence $\{a_n\}^{\infty}_{n=1}$ in $\mathbb{R}$ converges if and only if $\{a_n\}^{\infty}_{n=1}$ is bounded and half Cauchy.
\end{thm}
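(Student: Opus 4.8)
The plan is to prove the two implications separately, with the forward direction being essentially immediate from earlier results. If $\{a_n\}_{n=1}^{\infty}$ converges, then it is bounded and Cauchy by the standard facts of real analysis; by Theorem 1 a Cauchy sequence is both upward half Cauchy and downward half Cauchy, and hence in particular it is half Cauchy. This disposes of one direction in a sentence.

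For the converse, suppose $\{a_n\}_{n=1}^{\infty}$ is bounded and half Cauchy. By Lemma 1, applied to the sequence $\{-a_n\}_{n=1}^{\infty}$ (which is also bounded, and which converges if and only if $\{a_n\}_{n=1}^{\infty}$ does), it suffices to handle the case in which $\{a_n\}_{n=1}^{\infty}$ is downward half Cauchy; the upward case then follows by passing to $\{-a_n\}_{n=1}^{\infty}$. So assume $\{a_n\}_{n=1}^{\infty}$ is bounded and downward half Cauchy. Here I would introduce the tail suprema $b_m := \sup_{n \geq m} a_n$, which are finite because $\{a_n\}_{n=1}^{\infty}$ is bounded. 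The sequence $\{b_m\}_{m=1}^{\infty}$ is non-increasing and bounded below, so it converges to $L := \inf_m b_m$ by the bounded monotone convergence theorem quoted in the introduction. I would then claim $a_n \to L$.

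The crux is that the downward half Cauchy condition forces $b_m$ to stay close to $a_m$: given $\epsilon > 0$, choose $N_1$ so that $a_n - a_m < \epsilon/2$ whenever $n \geq m \geq N_1$; taking the supremum over $n \geq m$ gives $0 \leq b_m - a_m \leq \epsilon/2$ for every $m \geq N_1$. Combining this with $0 \leq b_m - L < \epsilon/2$ for all $m \geq N_2$ (from $b_m \downarrow L$) yields $|a_m - L| \leq (b_m - a_m) + (b_m - L) < \epsilon$ for all $m \geq \max(N_1, N_2)$, which is convergence. The only real obstacle — and it is more a matter of finding the right object than of overcoming a difficulty — is recognizing that one should look at the tail suprema (equivalently, the $\limsup$) and that the half Cauchy hypothesis is precisely what pins the sequence to that value. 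An alternative route, perhaps more in the spirit of the paper, is to extract a monotone (hence, being bounded, convergent) subsequence $a_{n_k} \to L$, and then apply the downward half Cauchy condition twice: once with the index pair $(n, n_k)$ for some $n_k \geq n$ to bound $a_n$ below by roughly $L$, and once with $(n, n_{k_0})$ for a fixed large $n_{k_0} \leq n$ to bound $a_n$ above by roughly $L$; squeezing gives $a_n \to L$. Either argument is routine once the right object is in hand.
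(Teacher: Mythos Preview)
Your proof is correct and structurally parallel to the paper's: both dispatch the forward direction via Theorem~1, reduce the upward case to the downward case via Lemma~1, and then handle the downward case with a $\liminf$/$\limsup$ argument. The difference is in the execution of that last step. The paper works with $I=\liminf_{n\to\infty} a_n$: from the downward half Cauchy condition it locates a single index $M$ with $a_M < I+\epsilon/2$, then bounds every later term by $a_n \leq a_M+\epsilon/2 \leq I+\epsilon$, concluding $\limsup a_n \leq I$. You work dually with $L=\limsup_{n\to\infty} a_n$ via the tail suprema $b_m=\sup_{n\geq m}a_n$, using the half Cauchy condition to get a uniform bound $b_m-a_m\leq\epsilon/2$ and then squeezing $a_m$ against $L$. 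Your route makes the meaning of the hypothesis a bit more visible (downward half Cauchy says precisely that the tail supremum stays close to the current term), while the paper's route is marginally shorter since it needs only one well-chosen index rather than the auxiliary monotone sequence $\{b_m\}$. The two arguments are close cousins, and your alternative subsequence sketch would also work.
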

\begin{proof}
Suppose that a sequence $\{a_n\}^{\infty}_{n=1}$ in $\mathbb{R}$ converges, then a standard analysis course tells us that $\{a_n\}^{\infty}_{n=1}$ is bounded and Cauchy. 
Theorem 1 tells us that this means $\{a_n\}^{\infty}_{n=1}$ is bounded and half Cauchy. Suppose that a sequence $\{a_n\}^{\infty}_{n=1}$ in $\mathbb{R}$ is bounded and downward half Cauchy.
Put $I=\liminf_{n\ra\infty}a_{n}$. Now, since $\{a_n\}^{\infty}_{n=1}$ is downward half Cauchy, for every $\epsilon > 0$ there exists $N\in \mathbb{N}$ so that $a_{n}-a_{m}<\frac{\epsilon}{2}$ for all $n\geq m\geq N$.
Moreover, there exists $M\geq N$ so that $a_{M}-I < \frac{\epsilon}{2}$. Otherwise $I+\frac{\epsilon}{2}$ is a lower bound for $\{a_n\}^{\infty}_{n=N}$, contradicting our definition of $I$.
So for $n\geq M$, $a_{n}\leq a_{M}+\frac{\epsilon}{2}\leq I+\epsilon$. So, to recap, for every $\epsilon > 0$ there exists $M\in \mathbb{N}$ so that for $n\geq M$, $a_{n}\leq I+\epsilon$.
Thus, $\limsup_{n\ra\infty}a_{n} \leq I$. So, $\{a_n\}^{\infty}_{n=1}$ converges. Alternatively, suppose that a sequence $\{a_n\}^{\infty}_{n=1}$ in $\mathbb{R}$ is bounded and upward half Cauchy. Then by Lemma 1, $\{-a_n\}^{\infty}_{n=1}$ in $\mathbb{R}$ is bounded and downward half Cauchy, hence convergent by the previous argument.
Since  $\{-a_n\}^{\infty}_{n=1}$ is convergent, $\{a_n\}^{\infty}_{n=1}$ is convergent.
\end{proof}

\section{An Example}
Consider the following system of recursive inequalities
$$0\leq x_{n}\leq x_{n-1} + y_{n-1},\quad n\in\mathbb{N},$$
$$0\leq y_{n}\leq \lambda y_{n-1},\quad n\in\mathbb{N}.$$
With $x_{0}$ and $y_{0}$ both positive and $0\leq \lambda < 1$. We will use the concept of half Cauchy sequences to prove that both $\{x_n\}^{\infty}_{n=1}$ and $\{y_n\}^{\infty}_{n=1}$ converge.
\begin{proof}
First, we prove by induction that $0\leq y_{n}\leq \lambda^{n}y_{0}$ for all $n\in\mathbb{N}$. The base case $n=0$ is clearly true. Now suppose $0\leq y_{N-1}\leq \lambda^{N-1}y_{0}$, then $0\leq y_{N}\leq \lambda y_{N-1} \leq \lambda(\lambda^{N-1}y_{0})= \lambda^{N}y_{0}$.
Since $0\leq \lambda < 1$, $y_{n}\ra 0$. We now prove by induction that for $k\leq n$, $0\leq x_{n}\leq x_{n-k} + \sum^{k}_{i=1}\lambda^{n-i}y_{0}$, the base case $k=1$ follows from our recursive inequality and the inequality $0\leq y_{n}\leq \lambda^{n}y_{0}$ for all $n\in\mathbb{N}$. Suppose now that for some $k< n$, $0\leq x_{n}\leq x_{n-k} + \sum^{k}_{i=1}\lambda^{n-i}y_{0}$, then by our recursive inequality
$$0\leq x_{n}\leq x_{n-k} + \sum^{k}_{i=1}\lambda^{n-i}y_{0}\leq x_{n-k-1} + y_{n-k-1} + \sum^{k}_{i=1}\lambda^{n-i}y_{0}\leq  x_{n-k-1} + \sum^{k+1}_{i=1}\lambda^{n-i}y_{0}.$$
Where the last inequality above again uses the inequality $0\leq y_{n}\leq \lambda^{n}y_{0}$ for all $n\in\mathbb{N}$. So we have shown that for $k\leq n$, $0\leq x_{n}\leq x_{n-k} + \sum^{k}_{i=1}\lambda^{n-i}y_{0}$. 
Letting $k=n$ we see $0\leq x_{n}\leq x_{0} + \sum^{n}_{i=1}\lambda^{n-i}y_{0}\leq x_{0} + \frac{y_{0}}{1-\lambda}$. So the sequence $\{x_n\}^{\infty}_{n=1}$ is bounded. Furthermore, for $n> m\geq N$ 
$$x_{n}-x_{m}\leq x_{n-(n-m)} + \sum^{n-m}_{i=1}\lambda^{n-i}y_{0}-x_{m} = \sum^{n-m}_{i=1}\lambda^{n-i}y_{0} = \lambda^{N}\left(\sum^{n-m}_{i=1}\lambda^{n-i-N}\right)y_{0} \leq \frac{\lambda^{N}y_{0}}{1-\lambda}.$$
Thus given $\epsilon>0$ there exists $N\in\mathbb{N}$ so that for $n\geq m\geq N$, $x_{n}-x_{m}<\epsilon$. So the sequence $\{x_n\}^{\infty}_{n=1}$ is bounded and downward half Cauchy, hence convergent by Theorem 4.
\end{proof}

\end{document}